\newtheorem{exercise}{Exercise}
\theoremstyle{definition}
\newtheorem{definition}{Definition}
\newtheorem{example}{Example}
\theoremstyle{plain}
\newtheorem{proposition}{Proposition}
\author{J. R. Arteaga, M. Malakhaltsev}
\title[$G$-structures and differential equations]{Ideas of E.~Cartan and S.~Lie in modern geometry: $G$-structures and differential equations. 
Lecture 2}
\address{}
\email{}
\begin{document}
\maketitle
\Opensolutionfile{ans}[answers_lecture_2]
\medskip
\fbox{\fbox{\parbox{5.5in}{
\textbf{Problem:}\\
How to find invariants of a geometric structure?
}}}
\vspace{1cm}

In the first lecture we have seen an example of how the invariants help us in the problem of classification of differential equations.
In general, the invariants play a very important role in the theory of geometrical structures and their applications. 
But how one can find the invariants of a geometrical structure?
 
To do it, we first consider an \emph{adapted frame field}, a frame field which is adapted  
to a geometrical object.
Then we derivate the vector fields of the frame and expand the derivatives in terms of the same frame. 
If the adapted frame is uniquely associated to a geometric object, then, any motion maps the adapted frame of the object to the adapted frame of its image. 
Therefore, the coefficients of the expansion will be the same, so they are invariants of the geometric object. 

The idea to use the coefficients of the derivation equations of an adapted frame in order to construct invariants of geometric objects (such as surfaces, Riemannian manifolds and submanifold, distributions, etc.) is due to \'E.\,Cartan, and is widely used in the modern differential geometry. 

In this lecture we will give examples of how this idea can be applied to the specific examples, and give introduction to the theory of $G$-structures which provides the background for development of Cartan's ideas in the framework of modern geometry.  

\section{Motivation for the theory of $G$-structures. Moving frames}

\subsection{Example of adapted frame field: moving frame of a plane curve}
Let $\gamma$ be a regular curve on the oriented plane $\mathbb{R}^2$ given by parametrization $\vec{r}=\vec{r}(s)$.
Then at each point $\vec{r}(s)$ we take the unit tangent vector field $\vec{T}(s)$ and the vector $\vec{N}(s)$ obtained by counterclockwise rotation on angle $\pi/2$. 
\begin{figure}[h]
\begin{center}
\includegraphics[scale=0.1]{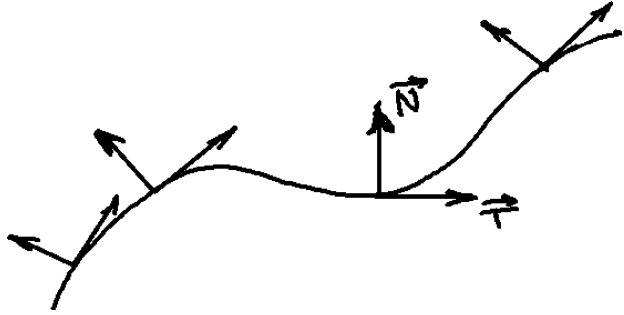}
\end{center}
\caption{Frenet frame}
\label{fig:Frenet_frame_2}
\end{figure}
Thus we obtain a frame at each point, so a \emph{frame field} $\{ \vec{T}(s),\vec{N}(s)\}$, called the \emph{Fren\'et frame field}, which is \emph{unique} if the curve is oriented. 

The \emph{derivation equations} of the Fren\'et frame field are the famous \emph{Fren\'et} equations: 
\begin{eqnarray}
\vec{T}'(s) &=& k(s) \vec{N}(s)
\\
\vec{N}'(s) &=& - k(s)\vec{T} 
\label{eq:Frenet_equations_2}
\end{eqnarray}
The coefficient function $k(s)$ is called the \emph{curvature function} of the curve and is an invariant of the curve. 
Indeed if we have a motion (an isometry) $A : \mathbb{R}^2 \to \mathbb{R}^2$ of the Euclidean plane $\mathbb{R}^2$, and $\widetilde{\gamma} = A(\gamma)$, then $\{A(\vec{T}),A(\vec{N})\}$ 
is the Fren\'et frame for $\widetilde{\gamma}$. Therefore, if $\widetilde{k}(s)$ is the curvature of $\widetilde{\gamma}(s)$, then $\widetilde{k}(s) = k(s)$.   
\begin{figure}[h]
\begin{center}
\includegraphics[scale=0.5]{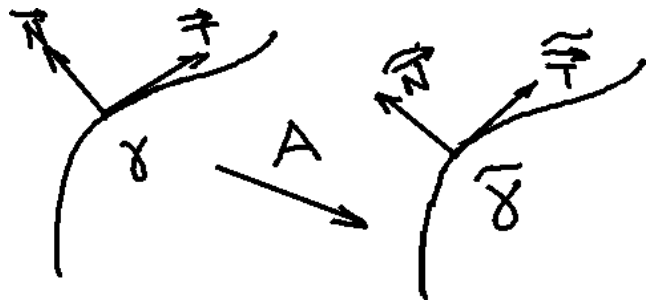}
\end{center}
\caption{Isometry action on the Fren\'et frame}
\label{fig:Frenet_frame_isometry_action_2}
\end{figure}

\begin{exercise}
a) Find the curvature of straight line and a circle of radius $r$. 
Does an isometry exists which maps a circle of radius $3$ to a circle of radius $2$?

b) What is the adapted frame for a curve in the three-dimensional space $\mathbb{R}^3$? What are the invariants of this curve? 
\begin{solution}
a) The curvature of a straight line is $0$. The curvature of a circle of radius $r$ is $1/r$. 
The isometry does not exist because $1/r \ne 0$.
b) The Fren\'et frame. The curvature and the torsion.
\end{solution}
\end{exercise}

\subsection{Example of adapted frame: surfaces in  $\mathbf{R}^{3}$. Invariants of surfaces}

If we have a surface $\Sigma \subset \mathbb{R}^3$ given by a parametric equation $\vec{r} = \vec{r}(u^1,u^2)$, then we can construct an adapted frame field in the following way. 
For the first two vectors of the frame we take 
\begin{equation}
\vec{e}_1 = \partial_1\vec{r}, \vec{e}_2 = \partial_2\vec{r},
\label{eq:adapted_frame_surface_12}
\end{equation}
and the third one is the unit normal vector 
\begin{equation}
\vec{n} = \frac{1}{\| \vec{e}_1 \times \vec{e}_2 \|} \vec{e}_1 \times \vec{e}_2.  
\label{eq:adapted_frame_surface_3}
\end{equation}
Then the \emph{derivation equations} of this frame are
\begin{equation}
\begin{split}
&\partial_i \vec{e}_j = \Gamma^k_{ij} \vec{e}_k + h_{ij} \vec{n},
\\
& \partial_i \vec{n} = - h^s_i \vec{e}_s.
\end{split}
\label{eq:derivation_equations_surface}
\end{equation}
The coefficients of the derivation equations  play important role in the surface theory ($\Gamma^k_{ij}$ are \emph{connection coefficients}, $h_{ij}$ is the \emph{second fundamental form}, $h^i_j$ is the \emph{shape operator}), 
however the adapted frame \emph{is not unique} because we can change parametrization, therefore the \emph{coefficients themselves are not the invariants of the surface}. 

Let us look what happens if we change the parametrization, and so change the adapted frame.
If $u^{k'} = u^{k'}(u^k)$, then
\begin{equation}
\begin{split}
& \vec{e}_{1} = \frac{\partial u^{1'}}{\partial u^1} \vec{e}_{1'} + \frac{\partial u^{2'}}{\partial u^1} \vec{e}_{2'},
\\  
& \vec{e}_{2} = \frac{\partial u^{1'}}{\partial u^2} \vec{e}_{1'} + \frac{\partial u^{2'}}{\partial u^2} \vec{e}_{2'},
\\
&\vec{e}_{3} = \vec{e}_{3'} 
\end{split}
\label{eq:change_of_frame_surface}
\end{equation}
Therefore, the transformation matrices form a group
\begin{equation}
G = 
\left\{
A = \left[
\begin{array}{ccc}
A^{1'}_{1} & A^{1'}_{2} & 0
\\
A^{2'}_{1} & A^{2'}_{2} & 0
\\
0 & 0 & 1
\end{array}
\right]
\right\}
\label{transformation_group_surface}
\end{equation}
Under the transformations $A \in G$ the matrix $h = [ h^k_i ]$ change: 
\begin{equation}
h' = A^{-1} h A,
\label{eq:}
\end{equation}
but 
\begin{equation}
K = \det  h , \text{ and } H = \text{tr}\, h  
\label{eq:gauss_mean_curvature}
\end{equation}
are invariant under the action of the group $G$,
so \emph{$K$ and $H$ are invariants of the surface} (in fact, these are the famous  the \emph{gauss curvature} and the \emph{mean curvature}, respectively).
\begin{exercise}
Does there exist a motion of $\mathbb{R}^3$ which maps an ellipsoid to a hyperboloid?
\begin{solution}
No, because the gaussian curvature of an ellipsoid is positive and the gaussian curvature of a hyperboloid is negative.
\end{solution}
\end{exercise}

\section{G-structures}
The examples above explain the idea to construct invariants from the coefficients of derivation equations.
Technically this idea is realized nowadays using the concept of $G$-structure. 

\subsection{Examples of adapted coframes} 
\label{subsec:examples_of_adapted_coframes}

In the classical differential geometry \emph{adapted coframe} means a coframe field which is, in some sense, adapted to a given geometrical structure. 
Let us consider several examples of classical geometric structures.

\begin{example}[Riemannian metric]
\label{ex:riemannian_metric}
If $g$ is a Riemannian metric given on a $n$-dimensional manifold $M$, then the adapted coframes are the orthonormal coframes of $g$, that is coframes $e^i$ such that $g = e^1\otimes e^1 + \dots e^n\otimes e^n$. 
The matrix of $g$ with respect to an orthonormal coframe is the identity matrix.  
\end{example}

\begin{example}[Almost complex structure]
\label{ex:almost_complex_structure}
An almost complex structure on a $2m$-dimensional manifold $M$ is a linear operator field $J$ such that $J^2=-I$. 
The adapted coframes are the coframes with respect to which the matrix of $J$ is 
\begin{equation}
\left[
\begin{array}{rr}
0 & -I
\\
I & 0
\end{array}
\right],
\label{eq:matrix_of_almost_complex_structure}
\end{equation} 
where $I$ is the identity $m\times m$-matrix.
\end{example}

\begin{example}[Presymplectic form]
\label{ex:presymplectic_form}
A presymplectic form is a $2$-form $\omega$ on a $2m$-dimensional manifold which is nondegenerate at all points. The adapted coframes are the coframes $\left\{ e^k \right\}_{k=1,2m}$ such that 
\begin{equation}
\omega = e^1 \wedge e^2 + \dots + e^{2m-1} \wedge e^{2m}
\label{eq:canonical_frame_of_presimplectic_form}
\end{equation} 
\end{example}

\begin{example}[Distribution]
\label{ex:distribution}
If $\Delta$ is a $k$-dimensional distribution on an $n$-dimensional manifold $M$, then the adapted coframes  $\left\{ e^1, \dots e^n \right\}$ are such that $\Delta$ is given by $n-k$ linear equations 
\begin{equation}
e^{k+1}=0, e^{k+2}=0, \dots e^n = 0. 
\label{eq:linear_equations_of_distribution}
\end{equation}

Let $x$ be a point of $M$. Then any coframe $e = \left\{ e^1, \dots, e^k \right\}$ of $T_x M$ determines an isomorphism $\varphi_e$  between the Grassmannian $G_k(T_x M)$ (the space of all $k$-dimensional subspaces of $T_x M$) and the Grassmannian $G_k(\mathbb{R}^n)$. 
So, an adapted coframe for a distribution $\Delta$ is a frame $e$ such that $\varphi_e$ maps $\Delta(x)$ to $\mathbb{R}^k \subset \mathbb{R}^n$, where $\mathbb{R}^k = \left\{ (0, \dots 0, x^{k+1}, \dots x^n) \in \mathbb{R}^n \right\}$. 
\end{example}

\begin{exercise}
What is the set of coframes adapted to a nonvanishing vector field $v$ on the plane $\mathbb{R}^2$?
\begin{solution}
For example, the set of coframes $\left\{ e^1,e^2 \right\}$ such that $e^1(v)=1$, $e^2(v)=0$. 
\end{solution}
\end{exercise}

In order to consider the previous examples from a general point of view we need a portion of the frame bundle theory. 
We will not go in details here, so we refer the reader to \cite{KN}, \cite{Montgomery} where he can find the detailed exposition of the theory. 
However, for the reader who is familiar with the theory of bundles, we give some additional information in the text starting with the symbol \Info\ and ending with  \Squarepipe.

\subsection{Linear coframe bundle}
\label{subsec:linear_coframe_bundle}
As we would like to consider all the adapted frames at all points of a manifold it is natural to consider the set of all frames at all points. 
However, for our purposes, it is convenient to consider the set of all coframes of the tangent spaces at all the points of a manifold.  

Recall that, for a vector space $V$, a linear functional $f : V \to \mathbb{R}$ is called a covector, and the set of all covectors endowed by the standard operations of addition of real valued functions and multiplication of a real valued function by a number, is a vector space $V^*$, called the \emph{covector space dual to $V$}. 
The dimension of $V^*$ is equal to the dimension of $V$. 
A \emph{coframe of $V$} is a frame of $V^*$.     

If $\left\{ e_1, \cdots, e_m \right\}$ is a frame of $V$, there exists a unique coframe $\left\{ e^1, \cdots, e^m \right\}$ such that $e^k(e_i)=\delta^k_i$. 
Moreover, if $v = v^1 e_1 + \cdots + v^m e_m$, then $e^i(v) = v^i$, $i=\overline{1,m}$.

For a manifold $M$, and $x \in M$,  denote by $B(T_x M)$ the set of all coframes of $T_x M$. 
The group $GL(m)$ acts on $B(T_x M)$ from the right: 
\begin{equation}
\forall e = (e^1,\dots, e^m)^t \in B(T_x M),\ A \in GL(m), \quad (e,A) \mapsto A^{-1}e, 
\label{eq:GL_action_on_coframes}
\end{equation}
It is clear that this action is free and transitive.

Now we set 
\begin{equation}
B(M) = \mathop\sqcup_{x \in M} B(T_x M), 
\label{eq:set_of_all_coframes}
\end{equation}
and we can define the surjective map, called the \emph{projection}, 
\begin{equation}
\pi : B(M) \to M, \quad \left\{ e^1, \cdots, e^m \right\} \in B(T_x M) \to x \in M.
\label{eq:projection_coframe_bundle}
\end{equation}
The subsets $\pi^{-1} = B(T_x M) \subset B(M)$ are called the \emph{fibers of} $\pi$.
The action \eqref{eq:GL_action_on_coframes} induces a free action of the group $GL(m)$ on the set $B(M)$ which is transitive at the fibers. 

Now take an atlas $(U_\alpha,x^i_\alpha)$ of the manifold $M$. Then we have the bijections 
\begin{equation}
\psi_\alpha : \pi^{-1}(U_\alpha) \to U_\alpha \times GL(n), 
\quad
\psi(A^i_j dx^j_\alpha|_p) = (p, ||A^i_j||^{-1}),   
\label{eq:coframe_bundle_trivializations}
\end{equation} 
and
\begin{equation}
\psi_\beta \psi_\alpha^{-1} : (U_\alpha \cap U_\beta) \times GL(n) \to (U_\alpha \cap U_\beta) \times GL(n),
\ (p, A) \to (p, \frac{Dx_\beta}{Dx_\alpha} A),
 \label{eq:coframe_bundle_transition_maps}
\end{equation}
where $\frac{Dx_\beta}{Dx_\alpha}$ is the Jacobi matrix of the coordinate change. 
Using this bijections, one can introduce the manifold structure on the set $B(M)$ so that $\pi : B(M) \to M$ is a smooth map, and the action of $G$ on $B(M)$ is also smooth.

The triple $(B(M), \pi, M)$ is called \emph{coframe bundle of a manifold $M$}.
The coframe bundle is an example of a principal fiber bundle.

\subsection{Principal fiber bundle}
Let $P$ be a manifold endowed by a free right action of a Lie group $G$. Assume that the $M = P/G$ is a smooth manifold, and the natural projection 
\begin{equation}
\pi : P \to M = P/G, \quad p \to [p]
\label{eq:principal_bundle}
\end{equation}
is a smooth map. 
In addition assume that the map $\pi$ is \emph{locally trivial}. 
This means that there exists a open covering $\left\{ U_\alpha \right\}$ of $M$ and diffeomorphisms 
$\varphi_\alpha : \pi^{-1}(U_\alpha) \to U_\alpha \times G$ with the properties:
\noindent
1) if $\varphi_\alpha(p) =  (x,h)$, then $\varphi(pg) = (x,hg)$ ($\varphi_\alpha$ is \emph{$G$-equivariant});
\noindent
2) the following diagrams are commutative:
\begin{equation}
\xymatrix{
\pi^{-1}(U_\alpha)  \ar[rd]_{\pi}  \ar[rr]_{\varphi_\alpha} & &  U_\alpha \times G \ar[ld]
\\
& U_\alpha &
}
\label{eq:trivializing_covering}
\end{equation}
Then $\pi : P \to M$ is called a \emph{$G$-principal bundle}.

One can see, that with above assumptions, the map  $\varphi_\beta \varphi_\alpha^{-1}$ is  
\begin{equation}
\varphi_\beta \varphi_\alpha^{-1} : U_\alpha \cap U_\beta \times G \to U_\alpha \cap U_\beta \times G,
\quad (x,g) \to (x, g_{\beta\alpha}g).
\end{equation}
The maps 
\begin{equation}
g_{\beta\alpha} : U_\alpha \cap U_\beta \to G
\end{equation}
are called the \emph{gluing maps}, because $P$ can be viewed as a space glued from \emph{trivial bundles} $U_\alpha \times G$ via the maps $g_{\beta\alpha}$.

Thus the coframe bundle is a  $GL(m)$-principal bundle. 

The sets of adapted frames given in examples in \ref{subsec:adapted_coframes} are principal subbundles of the coframe bundle. 
\begin{example}
The $3$-web considered in Lecture 1 determines a $G$-structure on $\mathbb{R}^2$, where $G$ is the group of scalar matrices. 
\end{example}

\begin{example}[Riemannian metric]
\label{ex:riemannian_metric_2}
The adapted coframes of a  Riemannian metric $g$ given on a manifold $M$ form a $SO(m)$-principal bundle $SO(M,g) \to M$.
\end{example}

\begin{example}[Almost complex structure]
\label{ex:almost_complex_structure_2}
The adapted coframes of an almost complex structure on a $2m$-dimensional manifold $M$ form a $GL(m,\mathbb{C})$-principal bundle $L(M,J) \to M$. 
\end{example}

\begin{example}[Presymplectic form]
\label{ex:presymplectic_form_2}
The adapted coframes of  a presymplectic form $\omega$ on a $2m$-dimensional manifold form a $Sp(m)$-principal bundle 
$Sp(M,\omega) \to M$. 
\end{example}

\begin{example}[Distribution]
\label{ex:distribution_2}
The adapted coframes of a $k$-dimensional distribution  $\Delta$ on an $n$-dimensional manifold $M$ form a $GL(n,k)$-principal bundle, where  
\begin{equation}
GL(n,k) = \left\{ 
\left[
\begin{array}{cc}
A & B
\\
0 & C
\end{array}
\right]
\mid A \in GL(k), C \in GL(n-k)
 \right\}.
\label{eq:group_distribution}
\end{equation}
\end{example}

\begin{exercise}
a) Prove that for the torus $\mathbb{T}^2$ we have $B(\mathbb{T}^2) \cong \mathbb{T}^2 \times GL(2)$.

b) Is it true that for the $2$-dimensional sphere $\mathbb{S}^2$ we have  $B(\mathbb{S}^2) \cong \mathbb{S}^2 \times GL(2)$?
\begin{solution}
b) No.
\end{solution}
\end{exercise}
\begin{definition}
A \emph{$G$-structure } is a $G$-principal subbundle $P(M;G) \to M$ of the coframe bundle $B(M)$. 
\end{definition}

\subsection{Associated bundles}
\label{subsec:associated_bundles}
Let $\pi : P \to M$ be a $G$-principal fiber bundle, and $\rho : G \times Y \to Y$ be a left action of $G$ on a manifold $Y$. 
Let $\{U_\alpha, \psi_\alpha : \pi^{-1}(U_\alpha) \to U_\alpha \times G\}$ be an atlas of the bundle $\pi: P \to M$ with the gluing maps $\psi_{\beta\alpha} : U_\alpha \cap U_\beta \to G$. 
Then the maps 
\begin{equation}
\hat\psi = U_\alpha \cap U_\beta \to Diff(Y), \quad \hat\psi(x) = \rho(\psi(x)), x \in U_\alpha \cap U_\beta,    
\label{eq:gluing_maps_of_associated_bundle}
\end{equation}
satisfy the \emph{cocycle conditions}: 
\begin{equation}
\begin{array}{l}
\textrm{1)}
\hat{\psi}_{\alpha\alpha}(x) = 1_Y, \forall x \in U_\alpha,
\\
\textrm{2)}
\hat{\psi}_{\alpha\beta}(x) \circ \hat{\psi}_{\beta\gamma}(x) \circ \hat{\psi}_{\gamma\alpha}(x)= 1_Y, \quad \forall x \in U_\alpha \cap U_\beta \cap U_\gamma.
\end{array}
\label{eq:cocycle_condition}
\end{equation}
therefore they are gluing maps of a bundle, called the \emph{bundle associated with the $G$-principal bundle $\pi: P \to M$ with respect to the action $\rho$}.

\begin{example}[Tangent bundle as associated bundle]
\label{ex:tangent_bundle_as_associated_bundle}
Consider the coframe bundle $B(M) \to M$ over an $m$-dimensional manifold $M$. 
In this case the group $G = GL(m)$. Consider the action of $GL(m)$ on the vector space $\mathbb{R}^m$:
\begin{equation}
\rho: GL(m) \times \mathbb{R}^m \to \mathbb{R}^m, \quad \rho(A,v) =  Av
\label{eq:action_of_GL(m)_on_R_m}
\end{equation}
Then the associated bundle $E$ is the bundle with gluing functions 
\begin{equation}
v_\beta = \rho(\frac{Dx_\beta}{Dx_\alpha}) v_\alpha, \text{ which translates to }
v_\beta^k = \frac{\partial x_\beta^k}{\partial x_\alpha^i} v_\alpha^i  
\label{eq:gluing_functions_of_tangent_bundle}
\end{equation} 
As this is exactly the transformation law for the vector components under the coordinate change, we see that the elements of $E$ are the tangent vectors, and $E = TM$ is the tangent bundle of $M$. 
\end{example}

\par\noindent
{\large \Info} 
\par\noindent
The associated bundle can be also described as follows. 
On the manifold $P \times Y$ we introduce the equivalence relation $(p,y) \sim (p g, \rho(g^{-1} y)$, for $p \in P$, $y \in Y$, and $g \in G$. By $\pi_{P \times Y}$ we denote the canonical projection of $P \times Y$ onto the set $E = P \times Y/\sim$ of equivalence classes.

The set $E$ can be endowed with a manifold structure, so that  $\pi_E : E \to M$,  $\pi_{P \times Y}(p,y) = [p,y] \mapsto \pi(p)$ is a fiber bundle with typical fiber $Y$, and this bundle is exactly the associated bundle. 
\begin{example}
Consider again Example~\ref{ex:tangent_bundle_as_associated_bundle}
Then $P = B(M)$, $Y = \mathbb{R}^m$, the bundle $E$ is the tangent bundle $TM$, and 
the map $\pi_{B(M) \times \mathbb{R}^m}$ is
\begin{equation}
\pi_{B(M) \times \mathbb{R}^m} : B(M) \times \mathbb{R}^m \to TM, 
\quad 
\left( \{e^1,\dots,e^m\} , (v^1,\dots,v^m)^t)\right) \to v^1 e_1 + \dots + v^m e_m,
\end{equation}  
where $\{e_1,\dots,e_m\}$ is the frame dual to the coframe $\{e^1,\dots,e^m\}$.
\end{example}
\begin{proposition}
\label{lem:coordinate_diffeomorphism_p_E}
To each $p \in P$ we can associate a unique diffeomorphism $p^E : E_{\pi(p)} \to Y$ such that 

1) $p^E([p, y]) = y$. 

2) $(p g)^E = g^{-1} p^E$.
\end{proposition}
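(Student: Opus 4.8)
\emph{Proof plan.} The plan is to exploit that the $G$-action on $P$ is free and that its orbits are precisely the fibers of $\pi$ (this is immediate from $M = P/G$ together with freeness, and for $P = B(M)$ it was noted above). Consequently every element of $E_{\pi(p)}$ admits a representative whose $P$-coordinate is exactly $p$, and this ``normal form'' will both dictate the definition of $p^E$ and yield uniqueness for free.

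First I would observe that any class in $E_{\pi(p)}$ can be written as $[p,y]$ for a suitable $y \in Y$: if $(p',y')$ represents it then $\pi(p') = \pi(p)$, so $p' = pg$ for a \emph{unique} $g \in G$, and the defining relation gives $(p',y') = (pg,y') \sim (p, \rho(g)y')$. Hence any map $f : E_{\pi(p)} \to Y$ satisfying $f([p,y]) = y$ for all $y$ is completely determined (since $[p,y]$ exhausts $E_{\pi(p)}$), which is the uniqueness assertion; and it forces us to define $p^E([p',y']) := \rho(g)y'$, where $p' = pg$. I would then check this is well defined: if $[p',y'] = [p'',y'']$ with $p' = pg'$ and $p'' = pg''$, the relation couples them by some $h$ with $p'' = p'h$ and $y'' = \rho(h^{-1})y'$; freeness of the $G$-action on $P$ gives $g'' = g'h$, hence $\rho(g'')y'' = \rho(g')\rho(h)\rho(h^{-1})y' = \rho(g')y'$. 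Property (1) is then the special case $p' = p$, $g = e$.

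Next I would verify the equivariance (2). An element of $E_{\pi(pg)} = E_{\pi(p)}$ can be written $[(pg)g', y] = [p(gg'), y]$ for a unique $g' \in G$; by definition $(pg)^E$ sends it to $\rho(g')y$, while $p^E$ sends it to $\rho(gg')y = \rho(g)\bigl(\rho(g')y\bigr)$. Therefore $(pg)^E = \rho(g)^{-1}\circ p^E = \rho(g^{-1})\circ p^E$, which is exactly the stated identity (with $g^{-1}$ understood to act on $Y$ through $\rho$). Finally, to see $p^E$ is a diffeomorphism I would pass to a local trivialization $\varphi_\alpha : \pi^{-1}(U_\alpha) \to U_\alpha \times G$ of $P$ near $x = \pi(p)$ and the induced trivialization $E|_{U_\alpha} \cong U_\alpha \times Y$, $[p',y'] \mapsto (\pi(p'),\rho(h)y')$ where $\varphi_\alpha(p') = (\pi(p'),h)$. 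Writing $\varphi_\alpha(p) = (x,h_0)$, one computes that in these coordinates $p^E$ is the restriction to the fibre over $x$ of $(x,z)\mapsto \rho(h_0^{-1})z$, which is smooth with smooth inverse $z \mapsto (x,\rho(h_0)z)$; since $\rho(h_0^{-1})$ is a diffeomorphism of $Y$, so is $p^E$.

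The only delicate point — more bookkeeping than genuine obstacle — is the well-definedness/equivariance step: one must keep straight the two ways a class in $E$ can be altered, namely moving the $P$-coordinate by $g$ versus moving the $Y$-coordinate by $\rho(g^{-1})$, and must use freeness of the $G$-action on $P$ to identify the relevant group element uniquely. Once that is handled carefully, everything else is formal.
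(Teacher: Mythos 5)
Your proof is correct and follows essentially the same route as the paper: you define $p^E$ by normalizing representatives to the form $[p,y]$ and verify equivariance by the same computation $(pg)^E([pg,g^{-1}y])=g^{-1}y$. The only difference is that you spell out the well-definedness, uniqueness, and smoothness details (the latter via the trivialization formula $p^E(x,z)=\rho(h_0^{-1})z$, which the paper records separately in Proposition~\ref{lem:coordinate_diffeomorphism_wrt_trivializations}) that the paper compresses into ``follows from the construction.''
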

\begin{proof}
1) follows from the construction of the associated bundle.

2) We have $(pg)^E([p g, g^{-1}y]) = g^{-1}y$. Therefore, $(pg)^E([p , y]) = g^{-1}y = g^{-1}p^E([p,y])$.
\end{proof}
The diffeomorphism $p^E$ will be called the \emph{coordinate diffeomorphism}, 
\begin{example}
Under the setting of the previous example, for a coframe 
$p = \left\{ e^1,\dots,e^m \right\}$, the map $p^{TM} : T_x M \to \mathbb{R}^m$ is the map 
\begin{equation}
p^{TM}(v) = (v^1=e^1(v),\dots,v^m=e^m(v)).
\end{equation}  
that is, $p^{TM}$ gives the coordinates of a vector $v \in T_{\pi(p)}M$ with respect to the coframe $p$.
\end{example}
So each $p \in P$ determines an isomorphism, called the \emph{coordinate isomorphism}, of a fibre of associated bundle over $\pi(p)$ to the typical fibre $Y$.

The \emph{local trivializations} of the associated bundle can be described in terms of the coordinate diffeomorphisms as follows.
Let us take a covering $\{U_\alpha\}$ of $M$ such that on each $U_\alpha$ we have a section $p_\alpha : U_\alpha \to P$  of the bundle $\pi : P \to M$. 
These sections determine trivializations 
\begin{equation}
\psi_\alpha : U_\alpha \times G \to \pi^{-1}(U_\alpha), \quad \psi(x,g) = p_\alpha(x) g 
\end{equation}
of the bundle $P$. 
Then the sections $p_\alpha$ also determine trivializations 
$\psi^E_\alpha : \pi_E^{-1}(U_\alpha) \to U_\alpha \times Y$ 
of the bundle $E$:
\begin{align}
&\psi^E_\alpha(e) = (\pi_E(e), p_\alpha(\pi_E(e)))^E (e)) 
\\
&(\psi^E_\alpha)^{-1}(x,y) = (p_\alpha(x)^E)^{-1}(y).
\end{align}

\begin{proposition}
\label{lem:coordinate_diffeomorphism_wrt_trivializations}
In terms of the trivializations $\psi_\alpha$ and $\psi^E_\alpha$ 
the map $p^E : E_{\pi(p)} \to Y$ is written as follows: 
if $\pi(p)=x$, and $p = \psi_\alpha(x,g) = p_\alpha(x) g$,  then 
\begin{equation}
p^E(x,y) = g^{-1} y.
\label{eq:coordinate_diffeomorphism_wrt_trivializations}
\end{equation}
\end{proposition}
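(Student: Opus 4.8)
The plan is to simply unwind the definitions of the two trivializations and then invoke part 2) of Proposition~\ref{lem:coordinate_diffeomorphism_p_E}. First I would fix a point $e \in E_x$ with $\psi^E_\alpha(e) = (x,y)$; by the defining formula $(\psi^E_\alpha)^{-1}(x,y) = (p_\alpha(x)^E)^{-1}(y)$ this means precisely that $e$ is the unique element of the fibre $E_x$ with $p_\alpha(x)^E(e) = y$, equivalently $e = [p_\alpha(x), y]$ in the description of $E$ as $(P \times Y)/\!\sim$. Here it is worth noting that $\pi_E(e) = x$, so that $p_\alpha(\pi_E(e)) = p_\alpha(x)$; this is what makes the notation $p^E(x,y)$ unambiguous once $(x,y)$ is identified with $e$ via $\psi^E_\alpha$.

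Next I would compute $p^E(e)$ using the hypothesis $p = \psi_\alpha(x,g) = p_\alpha(x) g$. Applying part 2) of Proposition~\ref{lem:coordinate_diffeomorphism_p_E} with the element $p_\alpha(x) \in P$ and the group element $g$, we get $(p_\alpha(x) g)^E = g^{-1}\, p_\alpha(x)^E$ as diffeomorphisms $E_x \to Y$. Hence
\[
p^E(e) = (p_\alpha(x) g)^E(e) = g^{-1}\left(p_\alpha(x)^E(e)\right) = g^{-1} y,
\]
which, under the identification of $e$ with $(x,y)$ furnished by $\psi^E_\alpha$, is exactly the asserted formula $p^E(x,y) = g^{-1} y$.

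The only point where I would be careful — and it is bookkeeping rather than a genuine obstacle — is keeping straight the directions of the two maps $\psi_\alpha$ and $\psi^E_\alpha$ together with the fact that $G$ acts on the right on $P$ but on the left on $Y$: the inverse $g^{-1}$ in the conclusion is forced by the equivariance rule $(pg)^E = g^{-1} p^E$ of Proposition~\ref{lem:coordinate_diffeomorphism_p_E}, and one should check that the $g$ appearing in $p = p_\alpha(x) g$ is literally the same $g$ as in $p = \psi_\alpha(x,g)$, which is immediate from $\psi_\alpha(x,g) = p_\alpha(x) g$. With this observed, the proof is a one-line substitution and nothing further is needed.
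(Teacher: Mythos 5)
Your proposal is correct and follows essentially the same route as the paper's proof: identify $(x,y)$ with the element $e=(\psi^E_\alpha)^{-1}(x,y)=(p_\alpha(x)^E)^{-1}(y)$ of the fibre $E_x$ and then apply the equivariance $(p_\alpha(x)g)^E=g^{-1}p_\alpha(x)^E$ from Proposition~\ref{lem:coordinate_diffeomorphism_p_E} to get $g^{-1}y$. No gaps; your extra remarks on bookkeeping only make explicit what the paper leaves implicit.
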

\begin{proof}
In terms of the trivializations $\psi^E_\alpha$ the map $p^E : E_{\pi(p)} \to Y$ is written as follows: if $\pi(p)=x$, 
and $p = p_\alpha(x) g$,  then 
\begin{multline}
(x,y) \to p^E (\psi^E_\alpha)^{-1}(x,y) = (p_\alpha(x) g)^E \circ (p_\alpha(x)^E)^{-1}(y) =
\\
= g^{-1} p_\alpha(x)^E \circ (p_\alpha(x)^E)^{-1}(y) = g^{-1} y.
\end{multline}
\end{proof}
\par\noindent
\textbf{\Squarepipe} 

\begin{proposition}
The sections $s : M \to E$ are in one-to-one correspondence with the $G$-equivariant maps from $P$ to $Y$, that is with the maps $f_s : P \to Y$ such that $f_s(pg)=\rho(g)^{-1} f_s(p)$. 

In terms of trivializations the map $f_s$ is given as follows:
\begin{equation}
(f_s)_\alpha(x,g) = g^{-1} y_\alpha(x).
\label{eq:local_representation_of_f_s_1}
\end{equation}
\end{proposition}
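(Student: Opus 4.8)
The plan is to build the correspondence directly out of the coordinate diffeomorphisms $p^E : E_{\pi(p)} \to Y$ furnished by Proposition~\ref{lem:coordinate_diffeomorphism_p_E}. Given a section $s : M \to E$, I would define
\[
f_s : P \to Y, \qquad f_s(p) = p^E\bigl(s(\pi(p))\bigr),
\]
which makes sense because $s(\pi(p)) \in E_{\pi(p)}$, the domain of $p^E$. The $G$-equivariance is then immediate from property 2) of Proposition~\ref{lem:coordinate_diffeomorphism_p_E}: since $\pi(pg) = \pi(p)$,
\[
f_s(pg) = (pg)^E\bigl(s(\pi(p))\bigr) = \rho(g)^{-1}\, p^E\bigl(s(\pi(p))\bigr) = \rho(g)^{-1} f_s(p).
\]

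Conversely, given a $G$-equivariant map $f : P \to Y$, I would set $s_f(x) = [p, f(p)]$ for any choice of $p \in \pi^{-1}(x)$. The key point to check is that this does not depend on the choice of $p$: replacing $p$ by $pg$ and using $f(pg) = \rho(g)^{-1} f(p)$ together with the defining relation $(p,y) \sim (pg, \rho(g^{-1})y)$ of the associated bundle gives $[pg, f(pg)] = [pg, \rho(g^{-1}) f(p)] = [p, f(p)]$. Equivalently, $s_f(x) = (p^E)^{-1}(f(p))$, and the independence of $p$ reflects exactly property 2). That the two constructions are mutually inverse then follows from property 1): on one hand $f_{s_f}(p) = p^E([p, f(p)]) = f(p)$, and on the other hand $(p^E)^{-1}(f_s(p)) = (p^E)^{-1}\bigl(p^E(s(\pi(p)))\bigr) = s(\pi(p))$, so $s_{f_s} = s$.

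It remains to pass between smoothness of $s$ and of $f_s$, and to derive the local formula \eqref{eq:local_representation_of_f_s_1}. Here I would work in a trivializing chart: choose a local section $p_\alpha : U_\alpha \to P$, so that $p = p_\alpha(x) g$ corresponds to $\psi_\alpha(x,g)$, and write the section in the induced trivialization as $\psi^E_\alpha(s(x)) = (x, y_\alpha(x))$. By Proposition~\ref{lem:coordinate_diffeomorphism_wrt_trivializations} we have $p^E(x,y) = g^{-1} y$, hence
\[
(f_s)_\alpha(x,g) = p^E\bigl(x, y_\alpha(x)\bigr) = g^{-1} y_\alpha(x),
\]
which is \eqref{eq:local_representation_of_f_s_1}. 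From this expression it is transparent that $f_s$ is smooth on $\pi^{-1}(U_\alpha)$ if and only if $y_\alpha$ is smooth, i.e. if and only if $s$ is smooth on $U_\alpha$; since the $U_\alpha$ cover $M$, this finishes the argument. The only genuinely fiddly point is the bookkeeping with the group-action conventions (noting $\rho(g)^{-1} = \rho(g^{-1})$ since $\rho$ is a left action); there is no real analytic obstacle once Propositions~\ref{lem:coordinate_diffeomorphism_p_E} and~\ref{lem:coordinate_diffeomorphism_wrt_trivializations} are in hand.
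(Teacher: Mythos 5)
Your proposal is correct and follows essentially the same route as the paper: define $f_s(p) = p^E(s(\pi(p)))$, obtain $G$-equivariance from property 2) of Proposition~\ref{lem:coordinate_diffeomorphism_p_E}, and derive the local formula \eqref{eq:local_representation_of_f_s_1} by combining the trivializations $\psi_\alpha$, $\psi^E_\alpha$ with Proposition~\ref{lem:coordinate_diffeomorphism_wrt_trivializations}. In fact your argument is slightly more complete than the paper's, which only carries out the direction from $s$ to $f_s$ and the local computation, whereas you also construct the inverse assignment $f \mapsto s_f(x) = [p, f(p)]$, verify its independence of the choice of $p$, and check that the two constructions are mutually inverse and compatible with smoothness.
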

\begin{example}
Let $s : M \to TM$ be a section of the tangent bundle, that is a vector field. 
Then, the corresponding map $f_s$ is 
\begin{equation}
f_s : B(M) \to \mathbb{R}^m, \quad 
p=\left\{ e^1, \dots, e^m \right\} \in B(M) \to \left(e^1(v(\pi(p))),\dots, e^m(v(\pi(p)))\right).       
\end{equation}  
that is $f_s$ maps a coframe $p \in B(M)$ at a point $x \in M$ 
to the vector in $\mathbb{R}^m$ of coordinates of $v(x)$ with respect to this coframe. 
\end{example}

\par\noindent
{\large \Info}
Namely, given a section $s$, we define 
\begin{equation}
f_s(p) = p^E (s(\pi(p)).
\label{eq:definition_of_f_s}
\end{equation}
The $G$-equivariant property of $f_s$ follows immediately from Lemma~\ref{lem:coordinate_diffeomorphism_p_E}. 

Let $s : M \to E$ be a section, then define the local representation of the section $y_\alpha : U_\alpha \to Y$ as follows: 
\begin{equation}
s_\alpha = (\psi^E_\alpha) \circ s |_{U_\alpha}: U_\alpha \to U_\alpha \times Y, 
\quad s_\alpha(x) = (x, y_\alpha(x)). 
\end{equation}
Then, the corresponding map $f_s : P \to Y$, $p \mapsto p^E(s(\pi(p)))$, can be written in terms of the trivializations $\psi_\alpha$ and the local representations $y_\alpha$ of the section $s$: 
\begin{multline}
(f_s)_\alpha(x,g) = f_s \circ \psi_\alpha(x,g) = f_s (p_\alpha(x) g) =
\\
= (p_\alpha(x) g)^E (s(x)) = g^{-1} (p_\alpha(x))^E (s(x)) = g^{-1} y_\alpha(x). 
\end{multline}
Thus we have proved
\begin{proposition}
\label{lem:local_representation_of_f_s}
Assume that a section $s : M \to E$ has local representations $y_\alpha : U_\alpha \to Y$ with respect to the trivializations $\psi^E_\alpha$. Then the local representation $(f_s)_\alpha = f_s \circ \psi_\alpha : U_\alpha \times G \to Y$ of the map $f_s$ is 
\begin{equation}
(f_s)_\alpha(x,g) = g^{-1} y_\alpha(x).
\label{eq:local_representation_of_f_s}
\end{equation}
\end{proposition}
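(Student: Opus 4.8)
The plan is to prove the formula by unwinding the definition of the $G$-equivariant map $f_s$ attached to a section $s$ and then invoking the equivariance of the coordinate diffeomorphism $p^{E}$ established in Proposition~\ref{lem:coordinate_diffeomorphism_p_E}. Recall that the correspondence $s \mapsto f_s$ is given by $f_s(p) = p^{E}(s(\pi(p)))$, where $p^{E} : E_{\pi(p)} \to Y$ is the coordinate diffeomorphism, so that the whole proof amounts to tracking how this expression behaves along a fibre of $P$.

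First I would fix, over a trivializing set $U_\alpha$, the local section $p_\alpha : U_\alpha \to P$ that produces the trivialization, so that $\psi_\alpha(x,g) = p_\alpha(x)\,g$ and, on the side of the associated bundle, $(\psi^{E}_\alpha)^{-1}(x,y) = (p_\alpha(x)^{E})^{-1}(y)$. The hypothesis that $y_\alpha : U_\alpha \to Y$ is the local representation of $s$ with respect to $\psi^{E}_\alpha$ then reads precisely $\psi^{E}_\alpha(s(x)) = (x, y_\alpha(x))$, i.e. $p_\alpha(x)^{E}(s(x)) = y_\alpha(x)$ for every $x \in U_\alpha$; this is the identity I would want to feed into the computation.

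Next I would compute, for $(x,g) \in U_\alpha \times G$: since $\pi(p_\alpha(x)\,g) = x$, one has $(f_s)_\alpha(x,g) = f_s(\psi_\alpha(x,g)) = f_s(p_\alpha(x)\,g) = (p_\alpha(x)\,g)^{E}(s(x))$, and then property 2) of Proposition~\ref{lem:coordinate_diffeomorphism_p_E}, namely $(p_\alpha(x)\,g)^{E} = g^{-1}\,p_\alpha(x)^{E}$, converts this into $g^{-1}\,(p_\alpha(x)^{E}(s(x))) = g^{-1}\,y_\alpha(x)$, which is the claimed formula; in fact this last step is also exactly the content of Proposition~\ref{lem:coordinate_diffeomorphism_wrt_trivializations} applied to $p = p_\alpha(x)\,g$ and the point $(x, y_\alpha(x))$ representing $s(x)$. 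The same chain of equalities, read instead as $f_s(pg) = (pg)^{E}(s(\pi(p))) = g^{-1}\,p^{E}(s(\pi(p))) = \rho(g)^{-1} f_s(p)$, is what certifies that $f_s$ is $G$-equivariant, so that it is a legitimate member of the correspondence referred to in the statement.

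I do not expect a genuine obstacle here: the assertion is a purely formal consequence of the two defining properties of the coordinate diffeomorphism $p^{E}$ together with the way $\psi^{E}_\alpha$ is built out of $p_\alpha$. The only things that need care are bookkeeping ones — correctly matching the abstract ``local representation $y_\alpha$ of $s$'' with the concrete value $p_\alpha(x)^{E}(s(x))$, and keeping the inverse in the equivariance law $(pg)^{E} = g^{-1} p^{E}$, so that the exponent of $g$ in the final formula comes out as $-1$ rather than $+1$. If one wants the statement to stand entirely on its own, one should also recall that $f_s$ is well defined and smooth, which is immediate from the construction of the associated bundle and from Proposition~\ref{lem:coordinate_diffeomorphism_p_E}, but this has already been done in setting up the bijection $s \leftrightarrow f_s$.
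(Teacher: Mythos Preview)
Your proposal is correct and follows essentially the same route as the paper: you unwind $(f_s)_\alpha(x,g)=f_s(p_\alpha(x)g)=(p_\alpha(x)g)^E(s(x))$, apply the equivariance $(pg)^E=g^{-1}p^E$ from Proposition~\ref{lem:coordinate_diffeomorphism_p_E}, and identify $p_\alpha(x)^E(s(x))$ with $y_\alpha(x)$. The paper's proof is exactly this chain of equalities, just written more tersely and without your additional remarks on Proposition~\ref{lem:coordinate_diffeomorphism_wrt_trivializations} and $G$-equivariance.
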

\par\noindent
{\large \Squarepipe}
 
\subsection{$G$-structure of adapted coframes constructed via a section of an associated bundle}
\label{subsec:adapted_coframes}
\begin{definition}
Let $P \to M$ be a $G$-principal bundle, and $G_1$ be a subgroup of $G$. 
If there exists a $G_1$-principal subbundle $P_1 \to M$ of $P$, then we say \emph{$P$ reduces to the bundle $P_1$}, or  \emph{$P$ reduced to $G_1$-principal bundle}.   
\end{definition}

Let $Y$ be a manifold endowed by a $G$-action $\rho$. 
Denote by $\mathcal{O}_G(Y)$ is the set of orbits of the action $\rho$. 
For each orbit $O \in \mathcal{O}(Y)$, denote by $\rho_O$ the transitive action of $G$ on $O$ induced by the action $\rho$. 

Let $P \to M$ be a $G$-principal bundle.
Denote by $E_O$ the bundle with typical fiber $O$ associated to the $G$-principal bundle $\pi: P \to M$ with respect to $\rho_O$.  Then the total space $E$ is split into the disjoint union of the total spaces $E_\mathcal{O}$. 

For each section $s : M \to E$, we set $M_O = \{x \in M \mid s(x) \in M_O\}$. 
In general, $M_O$ is not a submanifold. 
However, if $s$ is transversal to $E_O$, then $M_O$ is a submanifold of $M$. 
Note also that, if $f_s : P \to Y$ is the map corresponding to $s$, then $f^{-1}(O)$ consists of fibers of $P$, and $\pi(f^{-1}(O)) = M_O$. 

If we assume that $s : M \to E$ takes values only in one orbit $O \in \mathcal{O}(Y)$, and $G_0 \subset G$ is the isotropy subgroup of a point $y_0 \in O$, then $f^{-1}(y_0)$ is a $G_1$-subbundle of $P$, so $P$ reduces to a $G_1$-principal bundle. 
If we take another point $y_1 \in Y$, and $G_1$ is the isotropy subgroup of $y_1$, then $f^{-1}(y_1)$ is an $G_1$-subbundle of $P$, which is isomorphic to $f^{-1}(y_0)$.     

Let $\pi : B(M) \to M$ be the coframe bundle of a manifold $M$ and $Y$ be a manifold endowed with $GL(n)$-action $\rho : GL(n) \times Y \to Y$. 
We can construct the bundle $\pi_{E} : E \to M$ associated with $B(M)$ with respect to the action $\rho$. 
Then any section  $s : M \to E$ such that $f_s : B(M) \to Y$ takes values in one orbit $O$ of the action $\rho$, determines a reduction of $B(M)$ to a $G$-principal subbundle, where $G$ is the stationary subgroup of a point $y_0 \in O$. 
So any section of this type determines a reduction of the coframe bundle $B(M)$ to a $G_0$-subbundle $B_G(M) \to M$. 
The elements of $B_G(M)$ are called \emph{adapted coframes}, or \emph{coframes adapted to the section} $s$.   

\begin{example}[Vector field]
The action of $GL(m)$ on $\mathbb{R}^m$ has two orbits: $O = \left\{ v \ne 0 \right\}$ and $O_0 = \left\{ 0 \right\}$.
A vector field on a manifold $M$ is a section $s$ of the tangent bundle $TM$ which is the bundle associated to the coframe bundle $B(M)$. 
If the corresponding map $f_s : B(M) \to \mathbb{R}^m$ takes values in $O$, then vector field does not vanish.
\end{example}
\begin{example}[Riemannian metric]
\label{ex:riemannian_metric_1}
The bundle $T^2_0(M) \to M$ of symmetric tensors on a manifold $M$ is the bundle associated to $B(M)$ with respect to the standard action $\rho$ of $GL(m)$ on the space $T^2_0(\mathbb{R}^m)$ of tensors of type $(2,0)$ on $\mathbb{R}^m$: 
\begin{equation}
(\rho(A)t)(x,y) = t(A^{-1} x, A^{-1} y), \quad A \in GL(n), t \in S^2(\mathbb{R}^2). 
\label{eq:action_of_GL_on_tensors_(2,0)}
\end{equation}
Let $\{e^i\}_{i=\overline{1,m}}$ be the standard coframe of $\mathbb{R}^m$, and take the tensor  
\begin{equation}
g_0 = e^1 \otimes e^1 + e^2 \otimes e^2 + \dots + e^m \otimes e^m
\label{eq:standard_metric_on_Rn}
\end{equation}
in $T^2_0(\mathbb{R}^m)$. 
Let $O(g_0)$ be its orbit under the action $\rho$.  
A Riemannian metric $g$ given on a manifold $M$  is a section $g: M \to T^2_0(M)$ such that the corresponding map $f_g : B(M) \to T^2_0(\mathbb{R}^n)$ takes values in the orbit $O(g_0)$. 
The isotropy group of $g_0$ is the Lie group $O(n)$ of orthogonal matrices.
Therefore, a Riemannian metric $g$ determines a reduction of $B(M)$ to the $O(n)$-principal subbundle of orthonormal coframes, and the \emph{coframes adapted to $g$} are exactly the orthonormal coframes. 
\end{example}

\begin{example}[Almost complex structure]
\label{ex:almost_complex_structure_1}
The bundle $T^1_1(M) \to M$ of linear operators (tensors of type $(1,1)$) on a manifold $M$ is a bundle associated to $B(M)$ with respect to the standard action $\rho$ of $GL(n)$ on the space $T^1_1(\mathbb{R}^n) = (\mathbb{R}^n)^*\otimes \mathbb{R}$:
\begin{equation}
(\rho(A)t)(x,\xi) = t(A^{-1} x, A^*\xi), \quad A \in GL(n), t \in T^1_1(\mathbb{R}^2). 
\label{eq:action_of_GL_on_tensors_(1,1)}
\end{equation} 
Assume that the dimension of $M$ is even, that is $n=2m$, Let us denote by $I_k$ the indentity $k \times k$-matrix.
The orbit $O(J_0)$ of the matrix 
\begin{equation}
J_0 = 
\left[
\begin{array}{cc}
0 & -I_m
\\
I_m & 0
\end{array}
\right],
\label{eq:canonical_matrix_of_almost_complex_structure}
\end{equation}
consists of linear operators $J$ such that $J^2 = -I_{2m}$, and the isotropy group of $J_0$ is the matrix group 
\begin{equation}
GL(m,\mathbb{C}) \cong \{
C = \left[
\begin{array}{cc}
A & -B
\\
B & A
\end{array}
\right]    
\mid \det C \ne 0\}
\label{eq:GL(n,C)}
\end{equation}
\end{example}
An almost complex structure (see Example~\ref{ex:almost_complex_structure}) is the section $J : M \to T^1_1 M$ such that the corresponding map $f_J : B(M) \to T^1_1(\mathbb{R}^n)$ takes values in $O(J_0)$. 
Therefore an almost complex structure determines a reduction of $B(M)$ to a $GL(m,\mathbb{C})$-principal subbundle of $B(M)$ and the coframes adapted to $J$ are the elements of this subbundle. In fact, the adapted coframes are those with respect to which the matrix of $J$ is exactly $J_0$ \eqref{eq:canonical_matrix_of_almost_complex_structure}.  

\begin{example}[Presymplectic form]
\label{ex:presymplectic_form_1}
A presymplectic form field (see Example~\ref{ex:presymplectic_form}) is a section $\omega$ of the bundle $T^2_0(M)$ such that $f_\omega : B(M) \to T^2_0(\mathbb{R}^n)$ takes values in the orbit $O(\omega_0)$ with
\begin{equation}
\omega_0 = e^1 \wedge e^2 + \dots + e^{2m-1} \wedge e^{2m},
\label{eq:canonical_form_of_presimplectic_form}
\end{equation} 
where $\left\{ e^i \right\}_{i=\overline{1,n}}$ is the standard coframe of $\mathbb{R}^n$. 
The isotropy group of $\omega_0$ is the group of symplectic transformations
\begin{equation}
Sp(m) = \left\{ A \mid A^t \omega_0 A = \omega_0 \right\}.
\label{eq:group_of_symplectic_transformations}
\end{equation}  
Thus a presymplectic form field determines a reduction of $B(M)$ to a $Sp(m)$-principal subbundle of $B(M)$.  
The adapted coframes are the coframes $\left\{ e^k \right\}_{k=1,2m}$ such that with respect to these coframes the tensor $\omega(x)$ is given by~\eqref{eq:canonical_form_of_presimplectic_form} for any $x \in M$.
\end{example}

\begin{example}[Distribution]
\label{ex:distribution_1}
Denote by $G_k(\mathbb{R}^n)$ the Grassmann manifold of $k$-dimensional subspaces in $\mathbb{R}^n$, and by $G_k(M) \to M$ the Grassmann bundle of $k$-dimensional subspaces of the tangent spaces of a manifold $M$. 
This bundle is associated to $B(M)$ with respect to the action of $GL(n)$ on the Grassmann manifold $G_k(\mathbb{R}^n)$: 
\begin{equation}
\rho(A) \Delta = A(\Delta), \quad \Delta \in G_k(\mathbb{R}^n).
\label{eq:action_of_GL_on_Grassmannian}
\end{equation}
This action has the unique orbit and the isotropy group of the $k$-dimensional subspace 
\begin{equation}
\Delta_0 = \left\{ e^{k+1} = 0, \dots e^n = 0 \right\}
\label{eq:standard_k_plane_in_Rn}
\end{equation}
is the matrix group 
\begin{equation}
G = \left\{
\left[
\begin{array}{cc}
A & B
\\
0 & C
\end{array}
\right]
\mid 
A \in GL(k), B \in Mat(k,n-k), C \in GL(n-k)
\label{eq:group_for_grassmannian}
\right\}.
\end{equation}
A $k$-dimensional distribution on $M$ is a section $\Delta : M \to G_{k}(M)$ and $f_\Delta : B(M) \to G_k(\mathbb{R}^n)$ takes values in one orbit of $GL(n)$-action $\rho$ (because the orbit is unique).  
So any distribution $\Delta$ determines a reduction of $B(M)$ to a $G$-principal subbundle, and the adapted coframes are those with respect to which $\Delta(x)$ is given by equations \eqref{eq:linear_equations_of_distribution}.  
\end{example}

The above examples are examples of $G$-structures associated to various geometric structures. 
All of them obtained as reductions of the coframe bundle $B(M)$. 
\begin{exercise}
Describe the $G$-structure corresponding to a nonvanishing covector field $\xi$ on the plane $\mathbb{R}^2$.
\begin{solution}
The total space $P$ consists of coframes $\left\{ \xi(x),e^2 \right\}_x$, where $e^2$ is arbitrary but linearly independent with $\xi(x)$. The group $G$ consists of matrices 
$\left[
\begin{array}{cc}
1 & a
\\
0 & b
\end{array}
\right]$, where $b \ne 0$. 
\end{solution}
\end{exercise}

\section*{Summary of Lecture 2}
The main problem is ``How to construct invariants of a geometric structure''?
\begin{itemize}
\item
The invariants of a geometric structure can be constructed via coefficients of the derivation equations of an adapted frame.
\item
Adapted frames form a $G$-principal subbundle of the coframe bundle $B(M)$, a $G$-structure. 
The group $G$ is the group of linear transformations sending adapted frames to adapted frames. 
\item
The main goal of the Cartan reduction is to reduce a given $G$-structure to a minimal possible $G_1$.
This means that we make the class of adapted frames, and the group $G$, as small as possible.
In particular, if $G_1 = \left\{ e \right\}$, then we get the unique frame field associated to the given geometric structure. 
So the coefficients of the corresponding derivation equations are invariants.
 
A section of a bundle associated with the coframe bundle which takes values in one orbit of $G$-action, determines a $G$-structure. This is the zero step of the Cartan reduction.
\end{itemize}

\textbf{What will we do in the next lecture?} In the next lecture we will explain how to make the other steps of the Cartan reduction.

\Closesolutionfile{ans}
\section*{Answers to exercises}
\input{answers_lecture_2}

\end{document}